\begin{document}


\title{A characterization of graphs with small palette index}
\date{}

\newtheorem{theorem}{Theorem}
\newtheorem{proposition}[theorem]{Proposition}
\newtheorem{lemma}[theorem]{Lemma}
\newtheorem{definition}[theorem]{Definition}
\newtheorem{example}[theorem]{Example}
\newtheorem{corollary}[theorem]{Corollary}
\newtheorem{conjecture}[theorem]{Conjecture}
\newtheorem{remark}{Remark}
\newtheorem{problem}{Problem}
\newtheorem{claim}{Claim}

\author{Domenico Labbate\footnote{Dipartimento di Matematica, Informatica ed Economia, Universit\`{a} degli Studi della Basilicata, Italy.}, Davide Mattiolo\footnote{Department of Computer Science, KU Leuven Kulak, Belgium.}, Giuseppe Mazzuoccolo \footnote{Dipartimento di Informatica, Universit\`{a} degli Studi di Verona, Italy.}, \\Federico Romaniello$^{*}$, Gloria Tabarelli \footnote{Dipartimento di Matematica, Universit\`{a} di Trento,
Italy.}}
\maketitle

\begin{abstract}
Given an edge-coloring of a graph $G$, we associate to every vertex $v$ of $G$ the set of colors appearing on the edges incident with $v$. The palette index of $G$ is defined as the minimum number of such distinct sets, taken over all possible edge-colorings of $G$. A graph with a small palette index admits an edge-coloring which can be locally considered to be almost symmetric, since few different sets of colors appear around its vertices.
Graphs with palette index $1$ are $r$-regular graphs admitting an $r$-edge-coloring, while regular graphs with palette index $2$ do not exist. Here, we characterize all graphs with palette index either $2$ or $3$ in terms of the existence of suitable decompositions in regular subgraphs. As a corollary, we obtain a complete characterization of regular graphs with palette index ~$3$.
~\\
Keywords: \textit{edge-colouring, graphs, palette index, palettes.}\\
Math. Subj. Class.: \textit{05C15}
\end{abstract}

\section{Introduction}\label{sec:intro}
A classical and well-known problem in graph theory is to find a coloring of the edges of a graph, using as few colors as possible, such that no two incident edges receive the same color.
 In this paper, we consider a related problem, first introduced in \cite{meszka}, where the problem of minimizing the number of different sets of colors appearing on the edges around each vertex is proposed. More precisely, a \emph{$k$-edge-coloring} of a graph $G$ is a map $c\colon E(G)\to \{1,2,...,k\}$ such that for any pair of incident edges $e_1$ and $e_2$, $c(e_1)\neq c(e_2)$. If a graph $G$ has maximum degree $\Delta$ and it admits a $\Delta$-edge-coloring, then it is said to be a \emph{Class 1} graph, otherwise it is called a \emph{Class 2} graph. A Class 1 regular graph admits an edge-coloring with the same set of colors around each vertex and hence, to some extent, it locally admits a very symmetric edge-coloring. We denote by $\partial_G (v)$ the set of edges incident with a vertex $v \in V(G)$ and we define the \emph{palette} of $v$ with respect to an edge-coloring $c$ as $P_c(v)=\{c(e) \mid e \in \partial_G (v)\}$. Note that if the degree of a vertex $v \in V(G)$ is $d_G(v)=0$, i.e.\ if $\partial_G (v)= \emptyset$, then $P_c(v)= \emptyset$. The \emph{palette index}, denoted by $\check{s}(G)$ {{(see \cite{meszka})}}, of a graph $G$ is the minimum number of distinct palettes, taken over all edge-colorings, occurring among the vertices of the graph. Graphs with small palette index are in some sense close to be Class 1, since they admit an edge-coloring with few different sets of colors around each vertex, that is an edge-coloring which can be locally considered almost symmetric. 

The palette index of a graph was first introduced in \cite{meszka} for simple graphs, but the generalization to graphs with multiple edges is straightforward (see \cite{AveBonMaz}). 
It is an easy consequence of the definition that a graph $G$ has $\check{s}(G)=1$ if and only if it is a Class 1 regular graph. 
In paper \cite{meszka}, the authors determine the palette index of complete graphs, and they furnish the following further results in the regular case:

\begin{lemma}[\cite{meszka}] \label{lemma:not2}
Let $G$ be a regular graph. Then, $\check{s}(G) \neq 2$.
\end{lemma}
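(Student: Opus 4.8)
The plan is to argue by contradiction. Assume $G$ is $r$-regular; the case $r=0$ is trivial (every vertex has empty palette), so take $r\ge 1$, and suppose some proper edge-coloring $c$ of $G$ realizes exactly two distinct palettes $P_1\neq P_2$. The goal is to modify $c$ into a proper edge-coloring with a single palette, which forces $\check{s}(G)=1$ and contradicts $\check{s}(G)=2$. First I would record the basic structural facts: since every vertex has degree $r$ and its incident edges get pairwise distinct colors, each palette has exactly $r$ colors; as $P_1\neq P_2$ and $|P_1|=|P_2|$, both $P_1\setminus P_2$ and $P_2\setminus P_1$ are non-empty and of the same cardinality, say $a\ge 1$. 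Write $A$ (resp.\ $B$) for the set of vertices with palette $P_1$ (resp.\ $P_2$); both are non-empty because both palettes occur.

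The key observation concerns the color classes. If a color $\alpha$ lies in $P_1\setminus P_2$, then every edge colored $\alpha$ has both endpoints in $A$ (an endpoint with palette $P_2$ would have $\alpha$ in its palette), while every vertex of $A$ is incident with exactly one such edge; hence the edges colored $\alpha$ form a perfect matching of $A$. Symmetrically, each color of $P_2\setminus P_1$ induces a perfect matching of $B$, and each color of $P_1\cap P_2$ induces a perfect matching of $G$. Now fix a bijection pairing $P_1\setminus P_2=\{\alpha_1,\dots,\alpha_a\}$ with $P_2\setminus P_1=\{\beta_1,\dots,\beta_a\}$, and let $M_{\alpha_i}$, $M_{\beta_i}$ be the corresponding color classes. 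Since $M_{\alpha_i}$ perfectly matches $A$, $M_{\beta_i}$ perfectly matches $B$, and $A\cap B=\emptyset$, the union $M_{\alpha_i}\cup M_{\beta_i}$ is a perfect matching of $G$ for every $i$.

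The recoloring is then: for each $i=1,\dots,a$, recolor all edges of $M_{\beta_i}$ with color $\alpha_i$. I would check that this remains a proper edge-coloring: the color class of $\alpha_i$ becomes $M_{\alpha_i}\cup M_{\beta_i}$, which is a matching; an edge of $M_{\beta_i}$ has both endpoints in $B$, and no vertex of $B$ was incident with a color-$\alpha_i$ edge before (because $\alpha_i\notin P_2$), so no conflict is created; moreover the classes $\alpha_1,\dots,\alpha_a$ remain pairwise distinct and disjoint from the untouched classes, which are precisely the colors of $P_1\cap P_2$. Finally I would recompute the palettes: vertices of $A$ are unaffected and keep palette $P_1$; a vertex of $B$ had its edge of color $\beta_i$ turned into an edge of color $\alpha_i$ for every $i$, and no other incident edge changed, so its new palette is $(P_1\cap P_2)\cup\{\alpha_1,\dots,\alpha_a\}=P_1$. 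Hence the new coloring has the single palette $P_1$, so $G$ is a Class 1 regular graph and $\check{s}(G)=1$, contradicting $\check{s}(G)=2$.

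I do not expect any step to be genuinely hard; the point that needs care is the verification that performing all the recolorings simultaneously keeps the coloring proper and that each $B$-vertex's palette collapses onto exactly $P_1$. Both hinge on the same two facts established above: the colors outside $P_1\cap P_2$ induce perfect matchings of the respective sides $A$ and $B$, and $\alpha_i\notin P_2$, which together rule out any interference among the recolored color classes.
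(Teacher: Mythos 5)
Your proof is correct. Note that the paper itself does not prove this lemma---it is quoted from the reference \cite{meszka}---so there is no in-paper argument to compare against; your write-up supplies a valid, self-contained proof. All the key steps check out: regularity forces $|P_1|=|P_2|=r$, hence both $P_1\setminus P_2$ and $P_2\setminus P_1$ are non-empty of equal size; a color in $P_1\setminus P_2$ cannot appear on an edge with an endpoint of palette $P_2$, so its class is a perfect matching of $A$; and the simultaneous recoloring of each $M_{\beta_i}$ by $\alpha_i$ stays proper because $M_{\alpha_i}\cup M_{\beta_i}$ is a matching and the untouched classes are unaffected. It is worth observing that the paper's own machinery gives a shorter route to the same contradiction: take an $\check{s}$-minimal coloring with two palettes; by Remark \ref{min_hyper_property} the associated hypergraph on $\{P_1,P_2\}$ cannot have loops at both vertices, so every color lies in (say) $P_1$, giving $P_2\subseteq P_1$; regularity then forces $|P_1|=|P_2|$ and hence $P_1=P_2$, a contradiction. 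Your argument is essentially the content of that remark unpacked and applied $a$ times at once, so the two approaches are morally the same; yours has the advantage of being fully explicit and independent of the $\check{s}$-minimality formalism.
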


\begin{theorem}[\cite{meszka}]\label{thm:cubic}
Let $G$ be a connected cubic graph. Then, 
\begin{itemize}
		\item $G$ is Class $1$ if and only if $\check{s}(G)=1$;
		\item $G$ is Class $2$ with a perfect matching if and only if $\check s(G)=3$;
		\item $G$ is Class $2$ without a perfect matching if and only if $\check s(G)=4$.
\end{itemize}
\end{theorem}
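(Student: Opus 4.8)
The plan is to isolate two facts and derive all three equivalences from them. The first is a lower bound: since a connected cubic graph is $3$-regular, the given characterization $\check s(G)=1\iff G$ Class $1$ and Lemma~\ref{lemma:not2} together give $\check s(G)\geq 3$ for every connected cubic Class $2$ graph. The second, and substantial, ingredient is the dichotomy
\[\check s(G)=3 \quad\Longleftrightarrow\quad G \text{ is Class } 2 \text{ and has a perfect matching.}\]
Granting this, everything follows: the first bullet is the given statement about $\check s=1$; if $G$ is Class $2$ with a perfect matching then $\check s(G)=3$ by the dichotomy; if $G$ is Class $2$ without a perfect matching then $\check s(G)\neq 3$ by the dichotomy and $\check s(G)\geq 3$ from above, while $\check s(G)\leq 4$ because Vizing's theorem gives a proper $4$-edge-coloring of $G$ and a cubic vertex admits only $\binom{4}{3}=4$ possible palettes on four colors, so $\check s(G)=4$; and conversely $\check s(G)=4$ forces $G$ Class $2$ (as $\check s\neq 1$) and, by the dichotomy, forces $G$ to have no perfect matching.

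For the easy direction of the dichotomy — if $G$ is Class $2$ with a perfect matching $M$ then $\check s(G)\leq 3$ — I would color every edge of $M$ with a color $4$ and color the $2$-factor $G-M$ cycle by cycle: even cycles alternately with $1$ and $2$, and each odd cycle $v_1v_2\cdots v_{2k+1}v_1$ with colors $1,2,1,2,\dots,1,2,3$ around it, so that only its last edge uses the color $3$. This is a proper $4$-edge-coloring in which every vertex has palette $\{1,2,4\}$ except the two endpoints of the color-$3$ edge of each odd cycle, whose palettes are $\{1,3,4\}$ and $\{2,3,4\}$; since $G$ is Class $2$, $G-M$ contains at least one odd cycle, so exactly three palettes occur and $\check s(G)=3$ by the lower bound.

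The core is the converse: if a connected cubic graph has $\check s(G)=3$ then it has a perfect matching (it is automatically Class $2$, since $\check s\neq 1$). Fix a proper edge-coloring with exactly three distinct palettes $P_1,P_2,P_3$; since $G$ is cubic and the coloring is proper, $|P_i|=3$. The key remark is that for a color $\alpha$ the $\alpha$-colored edges form a perfect matching of the subgraph induced by $V_\alpha:=\{v: \alpha\in P_c(v)\}$, and $V_\alpha$ is precisely the union of the palette classes $i$ with $\alpha\in P_i$. Hence it suffices to exhibit colors whose index-sets $\{i:\alpha\in P_i\}$ partition $\{1,2,3\}$: the union of the associated matchings is then a perfect matching of $G$. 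If $P_1\cap P_2\cap P_3\neq\emptyset$ a common color does this by itself. Otherwise I would first observe that the palettes cannot all lack a \emph{private} color (one lying in no other palette): for a palette with no private color and empty triple intersection its three colors split between the other two palettes, giving $|P_i\cap P_j|+|P_i\cap P_k|=3$ for each $i$, a system with no integer solution. A short case analysis then finishes: if every $P_i$ has a private color, pick one from each (three singletons); if some $P_i$, say $P_1$, has none, its colors split nontrivially, so $P_1\cap P_2\neq\emptyset$ and $P_1\cap P_3\neq\emptyset$, and whichever of $P_2,P_3$ does have a private color can be paired with a color from the other intersection to give a partition of type $\{j,k\},\{i\}$.

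The step I expect to be the main obstacle is exactly this last argument on the three palettes: correctly ruling out the degenerate overlap patterns and checking that in every remaining pattern the required partition of $\{1,2,3\}$ actually exists. The counting observation that ``no private color everywhere'' produces the impossible system $|P_i\cap P_j|+|P_i\cap P_k|=3$ is what makes the case analysis close; beyond that, one must be careful that ``perfect matching of $G[V_\alpha]$'' genuinely follows from properness together with $3$-regularity, so that gluing the matchings along a partition of the vertex set yields a perfect matching of $G$. The two lower bounds and the explicit coloring are routine.
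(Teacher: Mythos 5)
Your argument is correct and complete. Note that the paper itself gives no proof of this statement --- it is quoted from \cite{meszka} --- so there is nothing in-text to compare against; judged on its own, your proof works. The two routine bounds ($\check s\ge 3$ for Class 2 cubic graphs via Lemma~\ref{lemma:not2}, and $\check s\le 4$ via Vizing plus $\binom{4}{3}=4$) are fine, the explicit three-palette colouring of a Class 2 graph with a perfect matching is the standard one, and the core converse is sound: each colour class is a perfect matching of $G[V_\alpha]$ by properness, and your case analysis does produce colours whose index-sets partition $\{1,2,3\}$ in every admissible overlap pattern. The only step you leave implicit is that when $P_1$ has no private colour and $P_1\cap P_2\cap P_3=\emptyset$, the split $|P_1\cap P_2|+|P_1\cap P_3|=3$ cannot be $3+0$: this needs $P_1\neq P_2$ and $P_1\neq P_3$, which holds because the three palettes are by definition distinct --- worth one sentence. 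It is also worth observing that your bookkeeping is exactly the paper's hypergraph $H_G^c$ in disguise: a private colour is a loop at $P_i$, a colour in $(P_i\cap P_j)\setminus P_k$ is an edge joining $P_i$ and $P_j$, the parity obstruction ruling out ``no private colour anywhere'' corresponds to excluding the all-edges triangle pattern, and your ``partition of $\{1,2,3\}$ by index-sets'' is an exact cover of $V(H_G^c)$ by hyperedges, which pulls back to a perfect matching of $G$. This is the same structural analysis the authors carry out in the proof of Theorem~\ref{thm:atmost3} (Figure~\ref{fig:hyp3}), so your proof fits naturally into the paper's framework.
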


Further results on the palette index of a graph appeared since then (see, e.g., \cite{AveBonMaz, BonMaz, CasPet, Gha, spanning_star, HorHud, Smb}). Many of them consider the computation of the palette index for some specific classes of graphs, such as complete bipartite graphs, $4$-regular graphs and some others.
 
On the other hand, very few results on the palette index of general graphs appeared in the literature until now. A recent one is proved by some of the authors in \cite{palette_large}:

\begin{theorem}[\cite{palette_large}]\label{thm:lowerbound}
Let $G$ be a graph and let $\Delta$ and $\delta$ denote its maximum and minimum degree, respectively. Suppose that $\Delta \geqslant 2$ and $G$ has no spanning even subgraph without isolated vertices. Then, $\check s(G) > \delta $.
\end{theorem}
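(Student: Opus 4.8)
The idea is to prove the contrapositive: assuming $\Delta\geq 2$ and $\check s(G)\leq\delta$, we produce a spanning even subgraph of $G$ with no isolated vertex. Fix an edge-coloring $c$ attaining $\check s(G)$ and let $P_1,\dots,P_k$ be the distinct palettes, so $k\leq\delta$. As $c$ is proper, $|P_c(v)|=d_G(v)$ for every $v$, hence every palette has size at least $\delta$ and the palette of a vertex of degree $\delta$ has size exactly $\delta$. For a set $S$ of colors let $H_S$ be the spanning subgraph of $G$ formed by the edges whose color lies in $S$; then $d_{H_S}(v)=|S\cap P_c(v)|$. Thus it is enough to find a set $S$ of colors for which $|S\cap P_i|$ is even and nonzero (so at least $2$) for every $i$: then $H_S$ is even and has no isolated vertex.

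We first dispose of two easy cases. If $\delta=1$ then $\check s(G)\leq 1$ would make $G$ a Class~$1$ regular, hence $1$-regular, graph, contradicting $\Delta\geq 2$; so we may assume $\delta\geq 2$, whence every palette has size at least $2$. If moreover every palette has even size, then $S$ equal to the set $C$ of all colors used works and $H_S=G$; so we may also assume that some palette has odd size. Passing to $\mathbb{F}_2^{C}$, the condition ``$|S\cap P_i|$ even for all $i$'' is precisely $\chi_S\in W^{\perp}$, where $W=\operatorname{span}(\chi_{P_1},\dots,\chi_{P_k})$, so $\dim W\leq k\leq\delta$. A short computation with orthogonal complements shows that if $W^{\perp}\subseteq\mathbb{F}_2^{C\setminus P_i}$ for some $i$ (that is, no admissible $S$ meets $P_i$), then $\mathbb{F}_2^{P_i}\subseteq W$, and since $|P_i|\geq\delta\geq\dim W$ this collapses all palettes into one and forces $\delta=k=1$, a contradiction. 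Hence each $K_i:=W^{\perp}\cap\mathbb{F}_2^{C\setminus P_i}$ is a proper subspace of $W^{\perp}$, and it remains to find a vector of $W^{\perp}$ avoiding $K_1\cup\dots\cup K_k$.

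Since a vector space over $\mathbb{F}_2$ is never a union of two proper subspaces, this is immediate for $k\leq 2$ (in particular for $\delta=2$). In general we use a weight argument: regarding $W^{\perp}$ as a linear code of length $|C|$, the elementary covering-radius bound (via an information set, the all-ones word lies within distance $|C|-\dim W^{\perp}$ of the code) yields a codeword of weight at least $\dim W^{\perp}=|C|-\dim W\geq|C|-k\geq|C|-\delta$. On the other hand, a codeword that misses some $P_i$ is supported inside $C\setminus P_i$, which has at most $|C|-\delta$ elements; so any codeword of weight strictly greater than $|C|-\delta$ meets every $P_i$ and provides the desired $S$. This settles the proof whenever $\dim W<k$ (the $\chi_{P_i}$ are linearly dependent) or $k<\delta$, since then the bound reads ``weight $\geq|C|-\delta+1$''.

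The remaining case, $k=\delta$ with $\chi_{P_1},\dots,\chi_{P_{\delta}}$ linearly independent, is the main obstacle: now the weight bound falls one short. Here we would exploit the earlier reduction that some palette, say $P_1$, has odd size. Finding $S$ amounts to finding $Z:=C\setminus S$ with $|Z|\leq\delta-1$ and $|P_i\cap Z|\equiv|P_i|\pmod 2$ for every $i$ (the inequality $|Z|\leq\delta-1<\delta\leq|P_i|$ then automatically guarantees $S\cap P_i\neq\emptyset$). Because the palette--color incidence matrix $A$ has full row rank $\delta$, this parity system has a solution $Z$ supported on any information set, of size $\delta$; the point is to choose the information set $J$ so that the resulting solution does not occupy all of it, i.e.\ so that $\chi_{C\setminus J}\notin W^{\perp}$ --- and this is where the oddness of $|P_1|$ together with $|P_1|\geq\delta$ is used. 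Assembling these pieces produces the required $S$, hence a spanning even subgraph of $G$ without isolated vertices, contradicting the hypothesis. The crux throughout is the tension between the evenness condition, which is linear and easy, and the requirement that $S$ meet every palette, which is forced using that all palettes are large.
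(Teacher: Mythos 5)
First, a remark on context: this paper only quotes Theorem \ref{thm:lowerbound} from \cite{palette_large} and contains no proof of it, so there is nothing in-text to compare your argument against; I can only assess the argument itself.

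Your framework (seek a color set $S$ with $|S\cap P_i|$ even and nonzero for every palette, so that $G[S]$ is the desired spanning even subgraph) is a reasonable route, and the reductions to $\delta\geq 2$, the case of all palettes even, the properness of each $K_i$ in $W^{\perp}$, and the weight argument settling $\dim W<\delta$ are all correct. But the ``remaining case'' $k=\delta$ with $\chi_{P_1},\dots,\chi_{P_\delta}$ independent is not a loose end you can assemble as described: the specific strategy you propose for it is provably insufficient. You reduce to finding $Z=C\setminus S$ with $|Z|\leq\delta-1$ and $|P_i\cap Z|\equiv|P_i|\pmod 2$, but such a $Z$ need not exist even when a valid $S$ does. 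Take $\delta=k=3$ with nested palettes $P_1=\{1,2,3\}\subset P_2=\{1,2,3,4\}\subset P_3=\{1,2,3,4,5\}$ (realizable: a Class~1 cubic graph on eight vertices colored $1,2,3$, plus a matching colored $4$ on four of them and an edge colored $5$ on two of those; here $\check s\leq 3=\delta$). The rows are independent, $|P_1|=\delta$ is odd, and the parity conditions force $4\in Z$, $5\in Z$ and $|Z\cap\{1,2,3\}|$ odd, hence $|Z|\geq 3>\delta-1$; yet $S=\{1,2\}$ satisfies all requirements. So the constraint $|Z|\leq\delta-1$, which you impose only to guarantee $S\cap P_i\neq\emptyset$ for free, discards exactly the solutions you need, and the sentence ``assembling these pieces produces the required $S$'' is not backed by an argument that can work. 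This is a genuine gap in what you yourself identify as the main case; to close it you must find a vector of $W^{\perp}$ outside $K_1\cup\dots\cup K_k$ without routing through the $|Z|\leq\delta-1$ bound (for instance by analyzing when $W^{\perp}$ could equal a union of the $K_i$, using that some palette has size exactly $\delta$ and that every color lies in some palette), or abandon the ``single color set'' ansatz for that case.
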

 
Note that both Theorems \ref{thm:cubic} and \ref{thm:lowerbound} give conditions for a graph $G$ to have a restriction on the palette index value in terms of some structural properties of $G$. Analogous results are proved for the $4$-regular case in \cite{BonMaz}.

In this paper, in the very same spirit, we consider the problem of relating the palette index of a graph $G$ to some structural properties of $G$. 
In Section \ref{definitions}, we introduce a description of the set of palettes induced by an edge-coloring of a graph in terms of an associated hypergraph $H$. This description turns out to be very practical in proving our main results in Section \ref{main}, where we present a complete characterization of graphs with palette index at most $3$ in terms of the existence of some graph decompositions into Class 1 regular subgraphs.

\section{Notation and preliminary results}\label{definitions}

In the course of the paper, we will make use both of graphs and hypergraphs, where a graph is nothing but a hypergraph with all edges of cardinality two. Every time we refer to a hypergraph, we mean that it may admit both parallel hyperedges (i.e., hyperedges on the same subset of vertices) and loops (i.e., hyperedges of cardinality one), whereas we consider only graphs without loops. 
For basic notation not defined here, we refer to \cite{BondyMurty}.

Recall that a \emph{decomposition} of a graph $G$ is a family $\{H_i\}_{i \in I}$ of subgraphs of $G$ such that $E(H_i)\neq \emptyset$ for every $i \in I$, $\bigcup_{i \in I}E(H_i)=E(G)$ and $E(H_i)\cap E(H_j)=\emptyset$ for every $i \neq j \in I$.
 
We denote by $\mathcal{C}=\{1,2,...,k\}$ the set of colors and by $\mathcal{P}_c=\{P_1,P_2,...,P_t\}$ the set of distinct palettes that $c$ induces among the vertices of $G$. Observe that the empty palette belongs to the set $\mathcal{P}_c$ if and only if $G$ has some isolated vertices. For each color $i \in \mathcal{C}$ we define $E_i=\{e \in E(G) \mid c(e)=i\}$, and for each palette $P_j \in \mathcal{P}_c$ we define $V_j=\{v \in V(G) \mid P_c(v)=P_j\}$. Finally, if $\emptyset \neq X \subseteq \mathcal{C}$, we denote by $G[X]$ the subgraph of $G$ induced by all the edges $e \in E(G)$ such that $c(e) \in X$.

\begin{remark}\label{induced_regular_class_1} 
Let $\emptyset \neq X \subseteq \mathcal{C}$. The following statements are equivalent:
\begin{itemize}
\item $G[X]$ is an $|X|$-regular Class 1 subgraph of $G$
\item for all $ v \in V(G[X])$, $X \subseteq P_c(v)$.
\end{itemize}
\end{remark}


The following definition will be largely used in what follows.

\begin{definition}
A $k$-edge-coloring $c$ of $G$ is \emph{$\check{s}$-minimal} if its associated set of palettes $\mathcal{P}_c$ has cardinalty $\check{s}$ and there is no $k'$-edge-coloring $c'$ of $G$ with an associated set of palettes such that $|\mathcal{P}_{c'}|=| \mathcal{P}_c|$ and $k'<k$.
\end{definition}

In other words, an edge-coloring of a graph $G$ is $\check{s}$-minimal if it has the minimum number of colors among all edge-colorings minimizing the number of palettes. Such a parameter was first considered in \cite{AveBonMaz}.
Moreover, it is remarked in \cite{meszka} that an $\check{s}$-minimal edge-coloring could need a number of colors larger than the chromatic index of \mbox{the graph.}

It will be practical in what follows to associate a hypergraph to an edge-coloring of a graph $G$. Hence, let $c$ be a $k$-edge-coloring of a graph $G$. We define $H_G^c$ as the hypergraph with $\mathcal{P}_c$ as set of vertices and $k$ hyperedges $h_1,h_2,...,h_k$, where $h_i=\{P \in \mathcal{P}_c \mid i \in P\}$ (see the top of Figure \ref{fig:ipergrafo_associato} for an example). Moreover, when (almost) all hyperedges of $H_G^c$ have size 1 or 2, we will depict them as loops and edges of a multigraph (see the bottom of Figure \ref{fig:ipergrafo_associato}). 

\begin{figure}[H]
\includegraphics[scale=0.5]{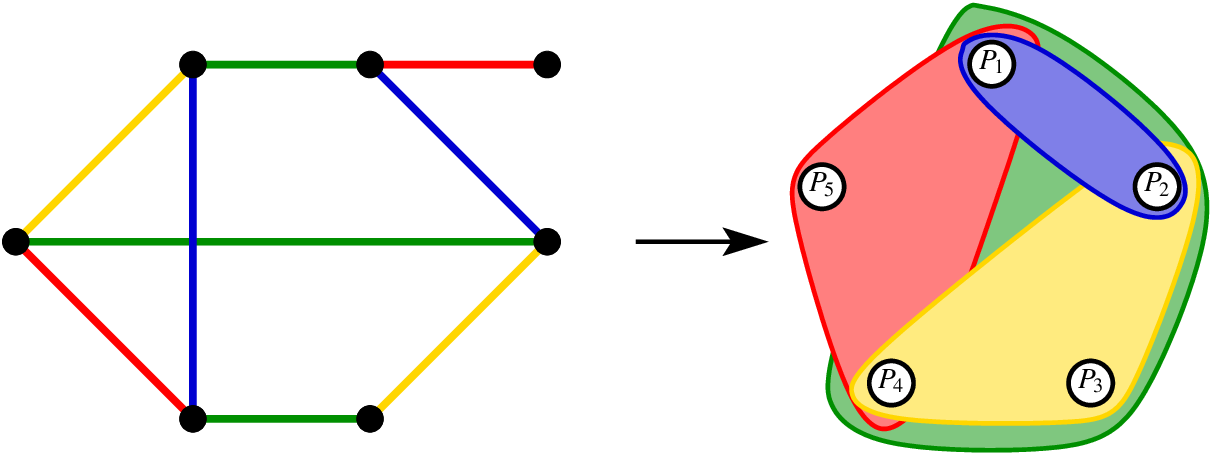}
\includegraphics[scale=0.5]{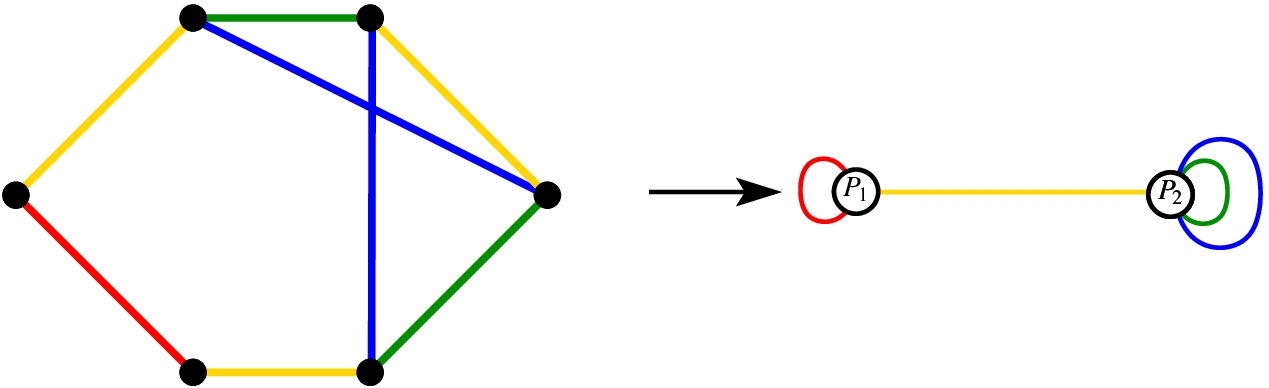}
\caption{Edge-colorings and their associated hypergraphs.}\label{fig:ipergrafo_associato}
\end{figure}

Observe that if $c$ is an $\check{s}$-minimal edge-coloring of $G$, and $h_\alpha,h_\beta \in E(H_G^c)$, then there must exist a vertex $v \in V(H_G^c)$ such that both $h_\alpha$ and $h_\beta$ are incident with $v$. For otherwise, if $h_\alpha$ and $h_\beta$ are independent hyperedges in $H_G^c$, there does not exist a palette $P \in \mathcal{P}_c$ with $\{\alpha, \beta\} \subseteq P$. Hence, the subgraph of $G$ induced by  $\{e \in E(G) \mid c(e) \in \{\alpha, \beta\}\}$ is $1$-regular. Therefore, it is possible to recolor the edges that receive color $\beta$ in $c$ with $\alpha$, thus constructing a new edge-coloring of $G$ with the minimum number of palettes $\check{s}$ and less than $k$ colors, contradicting the $\check{s}$-minimality of $c$. Hence we have the following:

\begin{remark} \label{min_hyper_property}
If $c$ is an $\check{s}$-minimal edge-coloring of $G$, then any two hyperedges $h_i, h_j \in E(H_G^c)$ are incident with some common vertex.
\end{remark}

{{Although it is not needed for proving our main results,}} we conclude this section by showing an interesting relation between the hypergraph $H_G^c$ and the concept of $H$-coloring of a graph $G$. 
Recall that, if $G$ and $H$ are graphs, an $H$-coloring of $G$ is an edge-coloring $f \colon E(G) \to E(H)$ such that for every vertex $u \in V(G)$ there exists a vertex $v \in V(H)$ with $f(\partial_G(u))=\partial_H(v)$. 
The concept of $H$-coloring is mainly known in relation to the Petersen coloring Conjecture by Jaeger (see \cite{FJaeger-nzf_problems}), which states that every bridgeless cubic graph admits a $P$-coloring, where $P$ is the Petersen graph. The conjecture is largely open, and it is known that it implies many other well-known conjectures such as the Cycle Double Cover Conjecture and the Berge-Fulkerson Conjecture (see \cite{MazMkr} for more details on such conjectures and implications).

There is no obstruction to define $\partial_H(v)$ as the set of all hyperedges incident with the vertex $v$ of a hypergraph $H$. Then, we can also assume $H$ to be a hypergraph in the definition of $H$-coloring. Although we are not aware of any paper where this more general formulation is considered, it naturally works in the same way.

In our context, it is straightforward that the map $f\colon E(G) \to E(H_G^c)$, such that $f(e)=h_{c(e)}$, is an $H_G^c$-coloring of $G$. Then, an alternative description of the palette index of a graph directly follows.

\begin{proposition}
The palette index $\check{s}(G)$ of a graph $G$ is equal to the order of a smallest hypergraph $H$ such that $G$ admits an $H$-coloring.
\end{proposition}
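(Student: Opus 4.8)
The plan is to prove the equality by establishing two inequalities, exploiting the correspondence between edge-colorings and $H$-colorings already sketched in the paragraph preceding the statement. Write $m(G)$ for the order (number of vertices) of a smallest hypergraph $H$ admitting an $H$-coloring of $G$; we want $\check{s}(G)=m(G)$.

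For the inequality $m(G)\leqslant\check{s}(G)$, take an edge-coloring $c$ of $G$ attaining the palette index, so $|\mathcal{P}_c|=\check{s}(G)$. The excerpt already observes that $f\colon E(G)\to E(H_G^c)$ defined by $f(e)=h_{c(e)}$ is an $H_G^c$-coloring: indeed for a vertex $u\in V(G)$ one checks directly from the definitions $h_i=\{P\in\mathcal{P}_c\mid i\in P\}$ that $f(\partial_G(u))=\{h_i\mid i\in P_c(u)\}=\partial_{H_G^c}(P_c(u))$, so the palette $P_c(u)$, viewed as a vertex of $H_G^c$, is the required witness. Since $H_G^c$ has vertex set $\mathcal{P}_c$, its order is exactly $\check{s}(G)$, giving $m(G)\leqslant\check{s}(G)$.

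For the reverse inequality $\check{s}(G)\leqslant m(G)$, let $H$ be a hypergraph of minimum order admitting an $H$-coloring $f\colon E(G)\to E(H)$. Enumerate the hyperedges of $H$ as $e_1,\dots,e_k$ and define an edge-coloring $c$ of $G$ by $c(e)=i$ whenever $f(e)=e_i$. This $c$ is a proper $k$-edge-coloring of $G$: two edges $e,e'$ of $G$ incident at a vertex $u$ satisfy $f(e),f(e')\in\partial_H(v)$ for the witness vertex $v$, and since $H$ has no loops issue here—even if $H$ has loops, two \emph{distinct} hyperedges incident with $v$ are needed unless... here one must be slightly careful, so I would instead argue that the palette map is what matters: for each $u\in V(G)$ with witness $v\in V(H)$ we get $P_c(u)=\{i\mid e_i\in f(\partial_G(u))\}=\{i\mid e_i\in\partial_H(v)\}$, a set determined solely by $v$. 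Hence the number of distinct palettes of $c$ is at most $|V(H)|=m(G)$, provided $c$ is a legitimate edge-coloring; the properness of $c$ follows because if $e\ne e'$ were incident in $G$ and $c(e)=c(e')$, then $f(e)=f(e')=e_i$, and $f(\partial_G(u))$ would contain $e_i$ with multiplicity reflecting two incident $G$-edges, which is compatible with $\partial_H(v)$ only if $e_i$ is a hyperedge counted once—so this forces a contradiction unless we allow $H$ to have multi-incidences, which the definition of $H$-coloring via the set equality $f(\partial_G(u))=\partial_H(v)$ rules out. Thus $\check{s}(G)\leqslant m(G)$, and combining the two inequalities yields $\check{s}(G)=m(G)$.

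\textbf{Main obstacle.} The delicate point is the second direction: one must verify that reading off colors from an arbitrary $H$-coloring really produces a \emph{proper} edge-coloring of $G$, i.e. that incident edges of $G$ never get mapped to the same hyperedge of $H$. This is exactly forced by the set-valued condition $f(\partial_G(u))=\partial_H(v)$ in the definition of $H$-coloring (a set cannot record a repeated element), but it deserves an explicit sentence since loops and parallel hyperedges are permitted in $H$ in this paper's conventions. Everything else is a routine unwinding of the definitions of $H_G^c$, $P_c$, and $\partial_H$.
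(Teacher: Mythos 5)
Your overall strategy — two inequalities, with $m(G)\leqslant\check{s}(G)$ witnessed by the hypergraph $H_G^c$ of an optimal coloring and $\check{s}(G)\leqslant m(G)$ obtained by reading colors off an optimal $H$-coloring — is exactly what the paper intends; the paper only records the first direction explicitly (the observation that $f(e)=h_{c(e)}$ is an $H_G^c$-coloring) and declares the converse immediate. Your first inequality is correct and complete, and the key observation in your second inequality (the palette $P_c(u)$ is determined by the witness vertex $v\in V(H)$, so $|\mathcal{P}_c|\leqslant|V(H)|$) is also right.

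However, your justification of the properness of $c$ in the second direction is wrong as stated, and you have the logic backwards in your closing remark. The set equality $f(\partial_G(u))=\partial_H(v)$ does \emph{not} force $f$ to be injective on $\partial_G(u)$: precisely \emph{because} a set cannot record a repeated element, the image set silently collapses a collision $f(e)=f(e')=e_i$ to a single occurrence of $e_i$, and the equality can still hold. (Concretely, if non-proper maps were allowed, the path on three vertices would admit an ``$H$-coloring'' onto a single vertex carrying one loop, while its palette index is $3$ — so the proposition would be false under that reading.) What actually saves the argument is the definition itself: the paper defines an $H$-coloring to be \emph{an edge-coloring} $f\colon E(G)\to E(H)$, and by the convention of Section~\ref{sec:intro} an edge-coloring assigns distinct colors (here, distinct hyperedges) to incident edges. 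So properness of $c$ is a hypothesis you may simply quote, not a consequence you need to extract from the set equality. Replace your tangled sentence with that one-line appeal to the definition and the proof is complete.
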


\section{Graphs with palette index at most 3} \label{main}

In this section, we provide a complete characterization of graphs with palette index at most $3$. First of all, in Theorem \ref{index_two}, we characterize graphs with palette index equal to $2$ and then, in Theorem \ref{thm:atmost3}, we characterize graphs with palette index at most $3$. Both characterizations are given with respect to the existence of a suitable decomposition of the graph $G$ in Class 1 regular subgraphs.
Note that the number of such subgraphs depends on the graph $G$ itself and, in particular, it is not uniquely determined by its palette index. In what follows we consider families of graphs with given palette index. Therefore, we need to consider decompositions in ``at most'' a certain number of subgraphs $H_i$'s, where by at most we mean that not all $H_i$'s always appear in the decomposition of a certain $G$ in the considered family.
In order to improve the readability of the following proofs, we always describe each subgraph $H_i$, but we omit to specify every time in what instances such an $H_i$ is actually present in the decomposition.

\begin{theorem}\label{index_two}
Let $G$ be a graph, and let $\Delta$ and $\delta$ denote its maximum and minimum degree, respectively. 
Then, $\check{s}(G)=2$ if and only if $\Delta > \delta$  and  $G$ can be decomposed into at most two Class 1 regular subgraphs $H_0$ and $H_1$ such that \begin{itemize}
\item[-] $H_0$ is spanning and $\delta $-regular;
\item[-] $H_1$ is $(\Delta-\delta )$-regular.
\end{itemize}
\end{theorem}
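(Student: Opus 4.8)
The plan is to prove both directions of the equivalence, using the hypergraph $H_G^c$ as the main organizing tool. For the reverse direction, suppose $G$ decomposes as claimed into $H_0$ (spanning, $\delta$-regular, Class 1) and possibly $H_1$ ($(\Delta-\delta)$-regular, Class 1); I would color the edges of $H_0$ with colors $\{1,\dots,\delta\}$ using a proper $\delta$-edge-coloring, and the edges of $H_1$ with a disjoint set of colors $\{\delta+1,\dots,\Delta\}$ using a proper $(\Delta-\delta)$-edge-coloring. Then every vertex of $G$ lies in $H_0$, so it sees all of $\{1,\dots,\delta\}$; a vertex also lying in $H_1$ additionally sees all of $\{\delta+1,\dots,\Delta\}$. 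So only two palettes occur: $\{1,\dots,\delta\}$ and $\{1,\dots,\Delta\}$. Since $\Delta>\delta$ these are distinct, and $\check s(G)\geq 2$ because $G$ is not regular (Lemma \ref{lemma:not2} rules out $\check s = 2$ for regular graphs, but more basically a non-regular graph cannot have $\check s(G)=1$, as $\check s(G)=1$ forces regularity); hence $\check s(G)=2$.

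For the forward direction, assume $\check s(G)=2$ and fix an $\check s$-minimal edge-coloring $c$ with $|\mathcal P_c|=2$, say $\mathcal P_c=\{P_0,P_1\}$. First I would argue $G$ is not regular: if it were, $\check s(G)\neq 2$ by Lemma \ref{lemma:not2}, a contradiction; hence $\Delta>\delta$. Now consider the hypergraph $H_G^c$, which has exactly two vertices $P_0,P_1$. By Remark \ref{min_hyper_property}, every pair of hyperedges shares a common vertex, but with only two vertices available this just says no hyperedge is "empty"; more useful is to note each hyperedge $h_i$ is a nonempty subset of $\{P_0,P_1\}$, so $h_i\in\{\{P_0\},\{P_1\},\{P_0,P_1\}\}$. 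Colors $i$ with $h_i=\{P_0,P_1\}$ appear at every vertex, so by Remark \ref{induced_regular_class_1} the union $X$ of all such colors induces an $|X|$-regular Class 1 spanning subgraph; this will be $H_0$, and $|X|=\delta$ since a vertex of minimum degree $\delta$ can only see colors from $X$ (it cannot see a color confined to the other palette without acquiring that palette too — here I need to check the two palettes are nested, i.e. $P_0\subsetneq P_1$). The remaining colors, those with $h_i=\{P_1\}$ (WLOG, taking $P_0$ to be the smaller palette), induce a subgraph $H_1$ in which exactly the vertices of $V_1$ have positive degree; I would show $H_1$ is $(\Delta-\delta)$-regular and Class 1, the degree count following since vertices in $V_1$ have degree $\Delta$ in $G$ and degree $\delta$ in $H_0$.

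The main obstacle is establishing that the two palettes are nested, i.e. that one contains the other, which underpins the whole "$\delta$-regular spanning $H_0$'' structure. This should follow from $\check s$-minimality together with Remark \ref{min_hyper_property}: if $P_0$ and $P_1$ were incomparable, pick $\alpha\in P_0\setminus P_1$ and $\beta\in P_1\setminus P_0$; then $h_\alpha=\{P_0\}$ and $h_\beta=\{P_1\}$ are disjoint hyperedges in $H_G^c$, contradicting Remark \ref{min_hyper_property}. Hence $P_0$ and $P_1$ are comparable, say $P_0\subseteq P_1$, and since the palettes are distinct $P_0\subsetneq P_1$. With nesting in hand, the class of colors $X=P_0$ is exactly the set of colors appearing at every vertex (the colors with $h_i=\{P_0,P_1\}$), so $G[X]=H_0$ is $|X|$-regular spanning Class 1 by Remark \ref{induced_regular_class_1}; a minimum-degree vertex has $P_c(v)=P_0$, forcing $|P_0|=\delta$. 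The colors in $P_1\setminus P_0$ give $H_1$, and a vertex $v\in V_1$ has $d_G(v)=|P_1|$ and $d_{H_0}(v)=\delta$, so $d_{H_1}(v)=|P_1|-\delta$; evaluating at a maximum-degree vertex gives $|P_1|=\Delta$, so $H_1$ is $(\Delta-\delta)$-regular, and it is Class 1 because $c$ restricted to its $|P_1\setminus P_0|=\Delta-\delta$ colors is a proper edge-coloring. A minor point to dispatch: if $P_1\setminus P_0=\emptyset$ then $\Delta=\delta$, contradicting non-regularity, so $H_1$ genuinely has edges whenever it is needed, and the two regular subgraphs $H_0,H_1$ together with the trivial edge-partition by color classes give the required decomposition.
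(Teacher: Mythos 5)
Your proof is correct and follows essentially the same route as the paper: both directions use the hypergraph $H_G^c$, with Remark \ref{min_hyper_property} forcing the two palettes to be nested and Remark \ref{induced_regular_class_1} yielding the spanning $\delta$-regular $H_0$ and the $(\Delta-\delta)$-regular $H_1$, and the converse coloring is built identically. The only point the paper treats more explicitly is the degenerate case $\delta=0$ (isolated vertices, empty smaller palette), where $G[X]$ is undefined for $X=\emptyset$ and $H_0$ is simply absent from the decomposition; your argument degenerates correctly there, but it is worth stating.
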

\begin{proof}
Let $G$ be a graph with palette index $2$. We show that $G$ has the required decomposition.

By Lemma \ref{lemma:not2} $G$ is not regular, therefore $\Delta>\delta$. Let $c\colon E(G)\to \{1,2,...,k\}$ be an $\check{s}$-minimal edge-coloring of $G$, with $\mathcal{P}_c=\{P_1,P_2\}$. Its associated hypergraph $H_G^c$ is of order two and may admit only two types of hyperedges: hyperedges both incident with $P_1$ and $P_2$, and loops incident with a unique palette. Since $c$ is $\check{s}$-minimal, by Remark  \ref{min_hyper_property}, all loops must be incident with the same palette, say $P_1$. 
If there is no hyperedge in $H_G^c$ with cardinality two, then $P_2$ is the empty palette, meaning that $G$ has some isolated vertices, that is $\delta=0$. Since $\mathcal{C} \neq \emptyset$, we define $H_1=G[\mathcal{C}]$. Since $V(G[\mathcal{C}])=V_1$ and $\mathcal{C}=P_1$, by Remark \ref{induced_regular_class_1}, $H_1$ is a $\Delta$-regular Class 1 subgraph of $G$. Clearly $E(H_1)= E(G)$ so that $\{H_1\}$ is a decomposition of $G$ as required in the statement.
If $H_G^c$ has hyperedges $h_1,\dots,h_t$,  $t>0$, of cardinality two with vertices $P_1$ and $P_2$, without loss of generality we denote by $h_{t+1},\ldots,h_k$ the remaining loops on $P_1$. Observe that $k>t$, for otherwise, we would have that $P_1=P_2$, so that $\check{s}(G)=1$ and $|V(H_G^c)|=1$ since $c$ is $\check{s}$-minimal.
The hypergraph $H_G^c$ is depicted in Figure \ref{hyper_index_2}.
\begin{figure}[H]
\includegraphics[width=7cm]{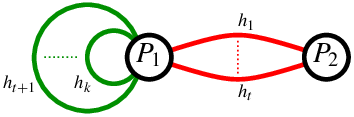}
\caption{The hypergraph $H_G^c$ associated to a $\check{s}$-minimal edge-coloring of a graph with $\check{s}(G)$ = 2.}
\label{hyper_index_2}
\end{figure}
Observe that $P_2=\{1,2,...,t\} \subset P_1=\{1,2,...,k\}$. Hence $V(G[P_2])=V_1 \cup V_2 =V(G)$ and for all $v \in V(G)$, $P_2 \subset P_c(v)$. Set $H_0=G[P_2]$. By Remark \ref{induced_regular_class_1}, the subgraph $H_0$ is a spanning $t$-regular Class 1 subgraph of $G$. Moreover, $t=\delta$.
Let $X=\mathcal{C} \setminus P_2=\{t+1,...,k\}$ and set $H_1=G[X]$. It follows $V(H_1)=V_1$, and, for every $v \in V(G)\cap V_1$, $X \subseteq P_1$. By Remark \ref{induced_regular_class_1}, $H_1$ is a $(k-t)$-regular Class 1 subgraph of $G$. Moreover, $k=\Delta$ and then $k-t=\Delta-\delta$ holds. Since $\mathcal{C}= P_2 \cup X $, $E(H_0)\cup E(H_1)=E(G)$ follows. Moreover, $P_2 \cap X=\emptyset$, so that $\{H_0,H_1\}$ is a decomposition of $G$ as required in the statement.  

Conversely, assume to have a decomposition of $G$ into at most two subgraphs $H_0$ and $H_1$ as in the statement. Since $G$ is not regular, then $\check{s}(G)\geq 2$. It remains to prove that $\check{s}(G)\leq 2$. To this aim, observe that, if $\delta>0$, $H_0$ is a $\delta$-regular Class 1 subgraph and then it admits a $\delta$-edge-coloring $c_0$ with colors $\{1,2,...,\delta\}$. For the same reason, $H_1$ is $(\Delta-\delta)$-regular and admits a $(\Delta-\delta)$-edge-coloring $c_1$ with colors $\{\delta+1,...,\Delta\}$. Define the edge-coloring $c\colon E(G)\to \{1,2,...,\Delta(G)\}$ as $$c(e)=\begin{cases}
c_0(e) \text{ if } e \in E(H_0),\\
c_1(e) \text{ if } e \in E(H_1).
\end{cases}$$

For every vertex $v \in V(G)$, $\partial_G(v)= \partial_{H_0}(v) \cup \partial_{H_1}(v)$. Hence, either $P_c(v)=\{1,2,...,\delta\}$ or $P_c(v)=\{1,2,....,\delta,\delta +1,...,\Delta\}$, i.e.,\ $\check{s}(G)\leq 2$.
\end{proof}

The next result gives a complete characterization of graphs with palette index at most $3$.  

\begin{theorem}\label{thm:atmost3}
Let $G$ be a graph. Then $\check{s}(G)\leq 3$ if and only if there exists a decomposition of $G$ in at most four Class 1 regular subgraphs $H_0, H_1, H_2, H_3$ with the following properties:
\begin{itemize}
\item $H_0$ is spanning
\item there exists a partition of $V(G)$ in at most three subsets $A_1,A_2, A_3$ such that $V(H_1)=A_2 \cup A_3$, $V(H_2)=A_1 \cup A_3$ and either $V(H_3)=A_3$ or $V(H_3)=A_1 \cup A_2$.
\end{itemize}

\end{theorem}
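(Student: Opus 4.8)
The plan is to mimic the structure of the proof of Theorem~\ref{index_two}: analyse the hypergraph $H_G^c$ associated to an $\check{s}$-minimal edge-coloring $c$ with $|\mathcal{P}_c|\le 3$, read off the required decomposition from it, and then for the converse build an explicit edge-coloring from a given decomposition. For the forward direction, write $\mathcal{P}_c=\{P_1,P_2,P_3\}$ (if $\check{s}(G)\le 2$ the previous theorem already applies, so assume equality). The hypergraph $H_G^c$ has order $3$, and by Remark~\ref{min_hyper_property} any two of its hyperedges meet. I would first argue that this forces every hyperedge to have size $1$ or $2$: a size-$3$ hyperedge corresponds to a color present at all three palettes, i.e.\ to a spanning regular Class~1 subgraph, and all such colors together give $H_0$, after which one passes to the residual coloring. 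So after stripping off $H_0=G[P_1\cap P_2\cap P_3]$ (spanning, Class~1, regular by Remark~\ref{induced_regular_class_1}), the remaining colors split according to which pair of palettes they touch — $\{P_2,P_3\}$, $\{P_1,P_3\}$, $\{P_1,P_2\}$ — or which single palette they touch as a loop; and by Remark~\ref{min_hyper_property} all loops sit on one common palette (say $P_3$) or, dually, the three pairwise edges cannot all be absent. This is exactly where the case split ``$V(H_3)=A_3$ or $V(H_3)=A_1\cup A_2$'' comes from: loops on $P_3$ give $H_3$ with $V(H_3)=A_3$, whereas the third ``pair'' class $\{P_1,P_2\}$ gives $H_3$ with $V(H_3)=A_1\cup A_2$ (the two options cannot coexist in an $\check{s}$-minimal coloring because the $\{P_1,P_2\}$-edges and the $P_3$-loops would be non-incident hyperedges). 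Setting $A_j=V_j$ (the set of vertices with palette $P_j$, intersected with $V(G)$ as needed) and $H_1=G[\{$colors on $\{P_2,P_3\}\}]$, $H_2=G[\{$colors on $\{P_1,P_3\}\}]$, one checks using Remark~\ref{induced_regular_class_1} that each $H_i$ is Class~1 regular with the stated vertex set, and that the color classes partition $\mathcal{C}$ so the $H_i$ form a decomposition.

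For the converse, suppose we are given the decomposition and partition as in the statement. Let $r_0=\deg H_0$, $r_1=\deg H_1$, $r_2=\deg H_2$, $r_3=\deg H_3$ (the degree of each regular piece, with $r_i=0$ if $H_i$ is absent). Colour $H_0$ with colours $\{1,\dots,r_0\}$, $H_1$ with the next $r_1$ colours, $H_2$ with the next $r_2$, and $H_3$ with the final $r_3$ colours; each piece is Class~1 regular so such colourings of the pieces exist, and since $E(G)$ is partitioned by the $E(H_i)$ this defines a proper edge-colouring $c$ of $G$. Now compute the palette of a vertex $v$. Every vertex lies in $H_0$, so $\{1,\dots,r_0\}\subseteq P_c(v)$ always. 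A vertex in $A_1$ lies in $H_2$ (and in $H_3$ in the case $V(H_3)=A_1\cup A_2$) but not in $H_1$; a vertex in $A_2$ lies in $H_1$ (and in $H_3$ in that same case) but not in $H_2$; a vertex in $A_3$ lies in both $H_1$ and $H_2$ (and in $H_3$ in the case $V(H_3)=A_3$). So in either case the palette of $v$ depends only on which of $A_1,A_2,A_3$ contains $v$, giving at most three distinct palettes, hence $\check{s}(G)\le 3$.

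The main obstacle I expect is the bookkeeping in the forward direction: controlling all the ways the $\le 3$-vertex hypergraph $H_G^c$ can look once one insists, via Remark~\ref{min_hyper_property}, that all hyperedges pairwise intersect and that loops are concentrated at a single palette, and then verifying that the degenerate configurations (some palette empty, i.e.\ $\delta=0$; some $A_j$ empty; a palette contained in another) all still produce a valid instance of the claimed decomposition — in particular arguing cleanly that the ``$V(H_3)=A_3$'' and ``$V(H_3)=A_1\cup A_2$'' alternatives are genuinely mutually exclusive under $\check{s}$-minimality, and that no further type of subgraph is needed. A secondary subtlety is that the statement allows $H_3$ (and possibly others) to be absent, so every claim of the form ``$H_i$ is $r_i$-regular Class~1'' has to be read with the convention that $r_i$ may be $0$; I would state this convention once at the start of the proof to avoid repeating the case distinction, exactly as the paragraph before the theorem suggests.
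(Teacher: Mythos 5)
Your proposal is correct and follows essentially the same route as the paper: set $H_0=G[P_1\cap P_2\cap P_3]$, classify the remaining colors by which palettes (vertices of $H_G^c$) their hyperedges touch, use Remark~\ref{min_hyper_property} to show that loops are confined to a single palette and exclude the coexistence of $P_3$-loops with $\{P_1,P_2\}$-pair colors (whence the two alternatives for $V(H_3)$), and prove the converse by coloring the pieces with disjoint color sets so that the palette of a vertex depends only on its part $A_i$. The only cosmetic difference is that the paper dispatches the cases $\check{s}(G)=1$ and $\check{s}(G)=2$ explicitly (via $H_0=G$ and via Theorem~\ref{index_two}, respectively) before the hypergraph analysis, which your parenthetical remark glosses over but does not get wrong.
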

\begin{proof}
Assume that $\check{s}(G) \leq 3$ and let $c$ be an $\check{s}$-minimal coloring of $G$.

If $\check{s}=1$, $G$ is Class 1 and regular. If $E(G)= \emptyset$, each vertex is associated with the empty palette. Let $E(G)\neq \emptyset$, then we set $H_0=G$, $A_1=V(G)$. Hence $\{H_0\}$ is the required decomposition of $G$.

If $\check{s}=2$, then $G$ is non regular with palettes $P_1$ and $P_2$. By Theorem \ref{index_two}, $G$ admits a decomposition into at most two subgraphs $H_0$ and $H_1$. Let $A_1=A_3=\emptyset$ and $A_2=V_1$. Then, $\{H_0,H_1\}$ is the required decomposition. 


If $\check{s}=3$, let $\mathcal{P}=\{P_1,P_2,P_3\}$. Up to the existence of some colors which belong to all the three palettes, {{by Remark \ref{min_hyper_property}}} the only possibilities for the associated hypergraph $H_G^c$ are the four depicted in Figure \ref{fig:hyp3}.

\begin{figure}[H]
\includegraphics[width=7cm]{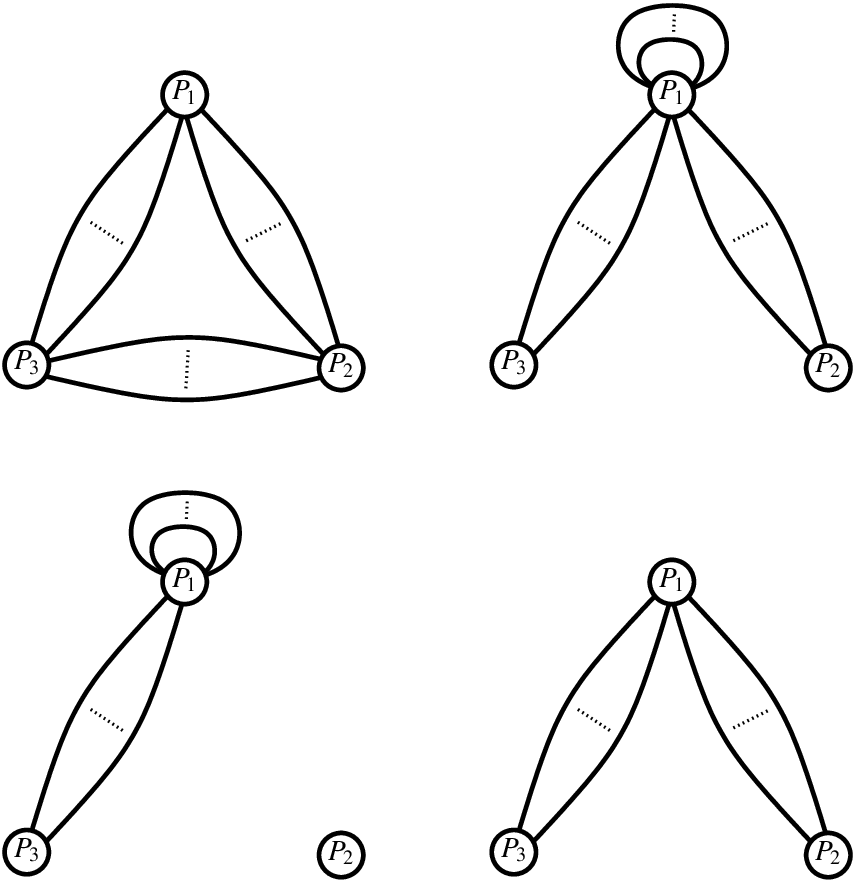}
\caption{The four possibilities for the associated hypergraph $H_G^c$ when $\check{s}=3$ (up to relabelling palettes).}
\label{fig:hyp3}
\end{figure}

We set $A_i=V_i$, for all $i=1,2,3$.
Since $c$ is $\check{s}$-minimal, for every $i\neq j\neq k \in \left\{1,2,3\right\}$ the set of colors lying in $(P_i \cap P_j) \setminus P_k$ and the set of colors lying in $P_k \setminus (P_i \cup P_j)$ cannot be both non-empty.
In addition to this, among the three sets $P_k \setminus (P_i \cup P_j),~i\neq j\neq k$, at most one can be non-empty.
If $P_1\cap P_2 \cap P_3 \neq \emptyset$, set $H_0=G[P_1\cap P_2 \cap P_3]$. Since $P_1\cap P_2 \cap P_3 \subseteq P_c(v)$ for all $v \in V(G)$, $H_0$ is regular Class 1 and spanning, by Remark \ref{induced_regular_class_1}.

We must distinguish two cases:
\begin{itemize}
	\item $P_k \setminus (P_i \cup P_j) = \emptyset$, for all $i\neq j \neq k \in \{1,2,3\}$
\end{itemize}
In this case, if $(P_i \cap P_j) \setminus P_k \neq \emptyset$, set $H_k=G[(P_i \cap P_j) \setminus P_k]$, for all $i \neq j \neq k \in \{1,2,3\}$. Please note that the non-empty sets $(P_i \cap P_j) \setminus P_k$, together with $P_1\cap P_2\cap P_3$, form a partition of $\mathcal{C}$, so that $\{H_0,H_1, H_2, H_3\}$ is a decomposition of $G$. Observe also that $V(H_k)= A_i \cup A_j$ and for all $v \in A_i \cup A_j$, either $P_c(v)=P_i$ or $P_c(v)=P_j$. This implies that $(P_i \cap P_j) \setminus P_k \subseteq P_c(v)$, so that each $H_k$ is Class 1 and regular by Remark \ref{induced_regular_class_1}. Hence $\{H_0,H_1, H_2, H_3\}$ is the required decomposition of $G$. 

\begin{itemize}
	\item $P_k \setminus (P_i \cup P_j) \neq \emptyset$ for some $i\neq j \neq k$
\end{itemize}
Without loss of generality, since at most one among the three sets $P_k \setminus (P_i \cup P_j)$ can be non-empty, suppose that $P_3 \setminus (P_1 \cap P_2) \neq \emptyset$. Then we set:
$H_1=G[(P_2 \cap P_3) \setminus P_1]$ if $(P_2 \cap P_3) \setminus P_1 \neq \emptyset$, $H_2=G[(P_1 \cap P_3) \setminus P_2]$ if $(P_1 \cap P_3) \setminus P_2 \neq \emptyset$ and $H_3=G[P_3\setminus (P_1 \cup P_2)]$. 

Since $\{(P_3 \setminus (P_1 \cup P_2), (P_1\cap P_3) \setminus P_2, (P_2\cap P_3) \setminus P_1, P_1\cap P_2 \cap P_3)\}$ is a partition of $\mathcal{C}$, we conclude that $\{H_0,H_1,H_2,H_3\}$ is a decomposition of $G$. Observe also that $V(H_1)=A_2 \cup A_3$, $V(H_2)=A_1 \cup A_3$ and $V(H_3)=A_3$, and that, by Remark \ref{induced_regular_class_1}, every $H_i$, $i=1,2,3$ is regular and Class 1. Hence $\{H_0,H_1,H_2,H_3\}$ is the required decomposition of $G$.

Vice versa, let $\{H_0,H_1,H_2,H_3\}$ a decomposition of $G$ as in the statement. Let $c_i$ be a minimal edge-coloring of $H_i$, for each $i=0,1,2,3$. We define an edge-coloring $c$ of $G$ as follows. Since for every $e \in E(G)$ there is a unique $i \in \{0,1,2,3\}$ such that $e \in E(H_i)$, let $c(e)=c_i(e)$. It suffices to show that $c$ defines at most three distinct palettes on the graph $G$. By hypothesis $A_1,A_2,A_3$ is a partition of $V(G)$, then for all $ u,v \in A_i$, we have that $u$ and $v$ belong to exactly the same set of subgraphs $H_i$, implying that $P_c(u)=P_c(v)$. Thus, $c$ induces at most three palettes.
\end{proof}

Recall that a graph has palette index $1$ if and only if it is regular and Class 1. Then Theorem \ref{thm:atmost3} in combination with Theorem \ref{index_two} implicitly permits to describe graphs with palette index exactly equal to three.
In particular, a compact description is possible in the regular case.

\begin{corollary}\label{cor:regular_3}
Let $G$ be a $k$-regular graph. Then $\check{s}(G)=3$ if and only if  
$G$ is Class 2 and it can be decomposed in three Class 1 $\frac{k-r}{2}$-regular subgraphs, for $0 \leq r<k$, and, if $r>0$, a Class 1 $r$-regular spanning subgraph.
\end{corollary}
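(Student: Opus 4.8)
The plan is to derive this as a special case of Theorem~\ref{thm:atmost3}, combined with the fact (stated in the excerpt) that $\check s(G)=1$ iff $G$ is regular and Class~1, and that regular graphs never have $\check s=2$ (Lemma~\ref{lemma:not2}). So for a $k$-regular graph $G$, $\check s(G)=3$ is equivalent to: $G$ is Class~2 (ruling out $\check s=1$) together with $\check s(G)\le 3$, and the latter is exactly the decomposition condition of Theorem~\ref{thm:atmost3}. The work is to simplify that condition under the regularity hypothesis.

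First I would run the forward direction. Assume $\check s(G)=3$. Then $G$ is Class~2, and Theorem~\ref{thm:atmost3} gives a spanning Class~1 regular subgraph $H_0$ together with Class~1 regular subgraphs $H_1,H_2,H_3$ and a partition $A_1,A_2,A_3$ of $V(G)$ with $V(H_1)=A_2\cup A_3$, $V(H_2)=A_1\cup A_3$ and either $V(H_3)=A_3$ or $V(H_3)=A_1\cup A_2$. Let $r=\deg(H_0)$ (with $H_0$ empty corresponding to $r=0$), and let $d_i$ be the degree of $H_i$ for $i=1,2,3$. Since $G$ is $k$-regular, for every vertex $v$ the degrees of the $H_i$ containing $v$ sum to $k$. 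I would count degrees at a vertex in each of the three classes $A_1,A_2,A_3$: a vertex of $A_1$ lies in $H_0,H_2$ and possibly $H_3$; a vertex of $A_2$ lies in $H_0,H_1$ and possibly $H_3$; a vertex of $A_3$ lies in $H_0,H_1,H_2$ and possibly $H_3$. In the case $V(H_3)=A_3$ these give $r+d_2=k$, $r+d_1=k$, $r+d_1+d_2+d_3=k$, forcing $d_1=d_2=k-r$ and $d_3=-(k-r)\le 0$, hence $k=r$ and all of $H_1,H_2,H_3$ vanish — i.e. $G$ is Class~1, contradicting $\check s(G)=3$. So in fact we are in the case $V(H_3)=A_1\cup A_2$, and then a vertex of $A_3$ sees $r+d_1+d_2=k$ while a vertex of $A_1$ sees $r+d_2+d_3=k$ and a vertex of $A_2$ sees $r+d_1+d_3=k$; subtracting gives $d_1=d_3$ and $d_2=d_3$, so $d_1=d_2=d_3=:d$ with $r+2d=k$, i.e. $d=\frac{k-r}2$ and $0\le r<k$ (strict since $G$ is Class~2, so not all $H_i$ are empty). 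This yields the three Class~1 $\frac{k-r}2$-regular subgraphs and, when $r>0$, the spanning Class~1 $r$-regular $H_0$, as claimed. (One should note that when some $A_i$ is empty the corresponding constraints are vacuous, but then the associated $H_j$'s also become empty and the conclusion still holds with a smaller "true" value of $r$; I'd phrase the statement so the three subgraphs are allowed to be empty, matching the "at most" convention used throughout the paper.)

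For the converse, suppose $G$ is $k$-regular and Class~2 and decomposes into three Class~1 $\frac{k-r}2$-regular subgraphs $H_1,H_2,H_3$ (and a spanning Class~1 $r$-regular $H_0$ if $r>0$), with $0\le r<k$. Here there is a subtlety: Theorem~\ref{thm:atmost3} requires the vertex sets to have the prescribed $A_1,A_2,A_3$ shape, whereas $H_1,H_2,H_3$ as given could be spanning (every $A_i$ empty except possibly one). The fix is to simply take $A_1=A_2=A_3=V(G)$ — wait, that is not a partition. Instead I would observe: if all of $H_1,H_2,H_3$ are spanning, set $A_3=V(G)$, $A_1=A_2=\emptyset$, and check $V(H_1)=A_2\cup A_3=V(G)$, $V(H_2)=A_1\cup A_3=V(G)$, $V(H_3)=A_3$ — wait this uses the first option $V(H_3)=A_3$, which works. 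In general one argues directly rather than through Theorem~\ref{thm:atmost3}: color $H_0$ with colors $1,\dots,r$ and color each $H_i$ ($i=1,2,3$) with its own block of $\frac{k-r}2$ fresh colors; since every vertex of $G$ has degree $k$ and $r+3\cdot\frac{k-r}2\ne k$ in general, actually a vertex cannot lie in all three $H_i$ unless some are empty — so I instead note that in this converse direction it is cleanest to invoke Theorem~\ref{thm:atmost3} by taking $A_1,A_2,A_3$ according to which $H_i$ each vertex avoids, and then $\check s(G)\le 3$; combined with $G$ Class~2 and Lemma~\ref{lemma:not2} this gives $\check s(G)=3$.

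The main obstacle I anticipate is the bookkeeping in the converse: matching the symmetric data "three $\frac{k-r}2$-regular subgraphs plus a spanning $r$-regular one" to the slightly asymmetric shape $\{V(H_1)=A_2\cup A_3,\ V(H_2)=A_1\cup A_3,\ V(H_3)\in\{A_3,\ A_1\cup A_2\}\}$ demanded by Theorem~\ref{thm:atmost3}, and verifying that degree-regularity of $G$ forces the two "$V(H_3)=A_3$" scenarios (the Class~1 case and the genuine case) to separate cleanly. Once the degree-counting at vertices of $A_1,A_2,A_3$ is laid out, both directions are short; the only real content beyond Theorem~\ref{thm:atmost3} is the linear algebra forcing $d_1=d_2=d_3=\frac{k-r}2$ and $0\le r<k$, and the observation that the $V(H_3)=A_3$ branch collapses to $G$ being Class~1.
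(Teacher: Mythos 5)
Your proposal is correct and follows essentially the same route as the paper: specializing Theorem~\ref{thm:atmost3}, ruling out the $V(H_3)=A_3$ branch by degree counting at vertices of the $A_i$'s, solving the resulting linear relations to get $d_1=d_2=d_3=\frac{k-r}{2}$, and for the converse feeding the decomposition back into Theorem~\ref{thm:atmost3} together with Lemma~\ref{lemma:not2}. Your explicit equations and the observation that the $A_i$'s can be recovered as ``the vertices avoiding $H_i$'' just make more precise what the paper states tersely.
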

\begin{proof}
Since $G$ is regular, $\check{s}(G)= 1$ if and only if $G$ is a Class 1 graph. Moreover, $\check{s}(G)\neq 2$ by Lemma \ref{lemma:not2}. 
Assume $\check{s}(G)=3$. Then, $G$ is a Class 2 graph. By Theorem \ref{thm:atmost3}, $G$ can be decomposed in at most four Class 1 regular subgraphs $H_0,H_1,H_2$ and $H_3$, where $H_0$, if $r>0$, is a spanning Class 1 $r$-regular subgraph of $G$. Let $A_1,A_2$ and $A_3$ be a partition of $V(G)$ as in the statement of Theorem \ref{thm:atmost3}. The case $V(H_3)=A_3$ cannot occur, otherwise every vertex in $A_3$ belongs to all four subgraphs $H_i$'s, while all vertices in $A_1$ and $A_2$ do not. A contradiction by regularity of $G$ and $H_i$'s.
Then, $V(H_3)=A_1\cup A_2$ holds. Denote by $d_i$ the common degree of every vertex in $H_i$, for $i=1,2,3$.
It follows that for every vertex $v \in A_i$,  
$$k=d_G(v)=r+\sum_{j\neq i} d_j.$$
Hence the following relation holds for each $i=1,2,3$,
$$k-r=\sum_{j\neq i} d_j.$$
It follows that $H_1$,$H_2$ and $H_3$ are all $\frac{k-r}{2}$-regular graphs.
Conversely, the graphs in the statement give a decomposition which satisfies the conditions in the statement of Theorem \ref{thm:atmost3}. Then, $\check{s}(G) \leq 3$. Since $G$ is regular and Class 2, the relation $\check{s}(G)<3$ does not hold and the assertion follows.
\end{proof}

Please note that the spanning subgraph $H_0$ could not appear in the decomposition described in Corollary \ref{cor:regular_3} (or if you prefer it can be considered to be an empty subgraph).  This happens, for instance, in Figure \ref{fig:K7} where we present a decomposition of the complete graph $K_7$ in two copies of $K_4$ and a copy of $K_{3,3}$.

One could be tempted to think that $H_0$ can be always chosen as the largest Class 1 regular subgraph of $G$ in order to obtain a decomposition such as the one described in Corollary \ref{cor:regular_3}.
We conclude the paper by showing that is not the case.
\begin{figure}[H]
\includegraphics[width=5cm]{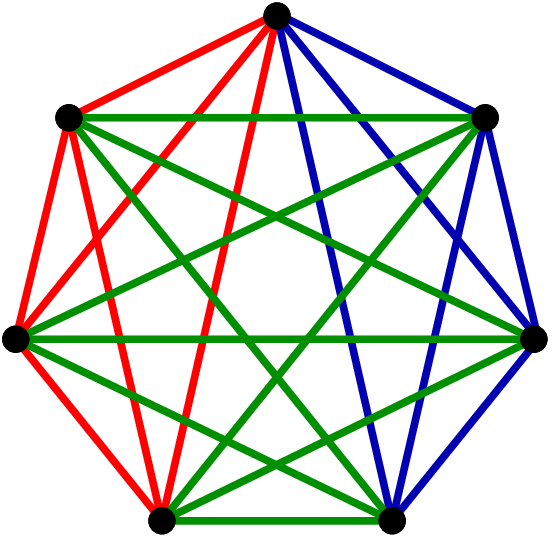}
\caption{A decomposition of $K_7$ in three Class 1 cubic graphs.}\label{fig:K7}
\end{figure} 
In other words, the choice of $H_0$ as the largest Class 1 spanning regular subgraph of $G$ could leave a subgraph of $G$ which does not admit a decomposition in three Class 1 regular subgraphs, while the graph $G$ itself could admit such a decomposition.

We consider the $4$-regular graph depicted in Figure \ref{fig:4reg}. As already shown in \cite{BonMaz}, such graph has a decomposition in three Class 1 $2$-regular subgraphs and then it has palette index equal to $3$. On the other hand, even if it admits a perfect matching, it does not admit two disjoint perfect matchings. Then, the complement of any perfect matching is a cubic graph without a perfect matching, that is by Theorem \ref{thm:cubic} a cubic graph with palette index equal to $4$. It follows that such a cubic graph cannot be decomposed in three Class 1 regular subgraphs as required.

\begin{figure}[H]
\includegraphics[width=5cm]{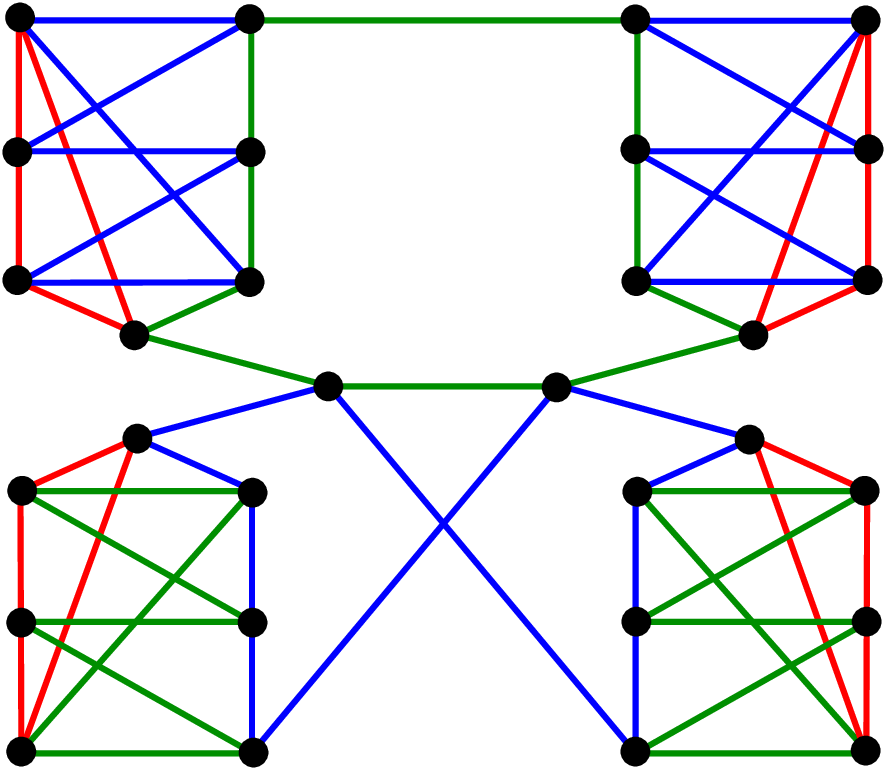}
\caption{A decomposition of a $4$-regular graph in three Class 1 $2$-regular graphs.}\label{fig:4reg}
\end{figure}

\section*{Acknowledgement}
The research that led to the present paper was partially funded by the groups GNSAGA of INdAM and Fondazione Cariverona, program ``\emph{Ricerca Scientifica di Eccellenza 2018}'', project ``Reducing complexity in algebra, logic, combinatorics-REDCOM. Davide Mattiolo is funded by a Postdoctoral Fellowship of the Research Foundation Flanders (FWO), project number 1268323N.


\begin{thebibliography}{99}
\bibitem{AveBonMaz} M.Avesani, A.Bonisoli, G.Mazzuoccolo, \emph{A family of multigraphs with large palette index}, Ars Math.Contemp. 17 (2019) 115-124.

\bibitem{BondyMurty} J. A. Bondy, U. S. R. Murty, \emph{Graph Theory}, London: Springer, 2008.


\bibitem{BonMaz} S. Bonvicini, G. Mazzuoccolo, \emph{Edge-colourings of $4$-regular graphs with the minimum number of palettes}, Graphs Combin. 32 (2016), 1293-1311.

\bibitem{CasPet} C.J. Casselgren, P.A. Petrosyan, \emph{Some results on the palette index of graphs}, Discrete Math. Theor. Comput. Sci. 21 \#11 (2019), 1-20.
	
\bibitem{Gha} A.B. Ghazaryan, \emph{On palette index of unicycle and bicycle graphs}, Proceedings of the YSU A: Physical and Mathematical Sciences, 53 (2019), 3-12. 
	
\bibitem{spanning_star} A.B. Ghazaryan, P.A. Petrosyan, \emph{On the palette index of graphs having a spanning star}, Proceedings of the YSU A: Physical and Mathematical Sciences, 56 (2022), 85-96.	
	
\bibitem{HorHud} M. Hor\v n\'ak, J. Hud\'ak, \emph{On the palette index of complete bipartite graphs}, Discuss. Math. Graph Theory 38 (2018), 463-476.

\bibitem{meszka} M. Hor\v n\' ak, R. Kalinowski, M. Meszka, M. Wo\' zniak, \emph{Minimum number of palettes in edge colourings}, Graphs Combin. 30 (2014), 619-626.

\bibitem{FJaeger-nzf_problems} F. Jaeger, \emph{Nowhere-zero flow problems}, Selected topics in graph theory 3, Academic	Press, San Diego, CA, 1988, 71-95.

\bibitem{palette_large} D. Mattiolo, G. Mazzuoccolo, G. Tabarelli, \emph{Graphs with large palette index}, Discrete Math. 345 (2022).

\bibitem{MazMkr} G. Mazzuoccolo, V. Mkrtchyan \emph{Normal edge--colourings of cubic graphs}, J. Graph Theory 94 (2020), 75-91. 

\bibitem{Smb} K.S. Smbatyan, \emph{Two results on the palette index of a graphs}, Proceedings of the YSU A: Physical and Mathematical Sciences 55 (2021), 36-43. 
\end{thebibliography}
\end{document}